\documentclass[11pt]{amsart}
\usepackage[margin=1in]{geometry}

\usepackage{amssymb}
\usepackage{amsthm}
\usepackage{amsmath}
\usepackage{mathrsfs}
\usepackage{amsbsy}
\usepackage{bm}
\usepackage{hyperref}
\usepackage{tikz}
\usepackage{array}
\usepackage{enumerate}
\usepackage{bbm}
\usepackage{comment}
\usepackage{mathtools}
\usepackage{makecell} 
\usepackage{colortbl}
\usepackage{xcolor}

\definecolor{LightBlue}{rgb}{0,0.8,1} 

\hypersetup{colorlinks=true, citecolor=LightBlue, linkcolor=blue,urlcolor=blue} 

\usepackage{hhline}
\setlength{\parskip}{0em}
\allowdisplaybreaks
\usepackage[noadjust]{cite}

\usepackage{caption}
\usepackage[noabbrev,capitalise,nameinlink]{cleveref}
\crefname{conjecture}{Conjecture}{Conjectures}

\newtheorem{theorem}{Theorem}[section]

\newtheorem{lemma}[theorem]{Lemma}

\theoremstyle{definition}

\newtheorem{remark}[theorem]{Remark}

\newcommand{\ba}{\mathbf{a}}
\newcommand{\bb}{\mathbf{b}}
\DeclareMathOperator{\id}{id}

\title{On the number of permutation-twisted dot products}
\author{Ruben Carpenter}
\address{Yale University, 421 Temple St, New Haven, CT 06511}
\email{ruben.carpenter@yale.edu}
\author{Colin Defant}
\address{Harvard University, 1 Oxford St, Cambridge, MA 02139}
\email{colindefant@gmail.com} 
\author{Noah Kravitz}
\address{St John's College, Oxford and Mathematical Institute, University of Oxford; St Giles', Oxford OX1 3JP, UK}
\email{noah.kravitz@maths.ox.ac.uk}

\begin{document}

\begin{abstract}
Let $\mathbb{K}$ be a field of characteristic $0$. For each choice of distinct $a_1, \ldots, a_n\in \mathbb{K}$ and distinct $b_1, \ldots, b_n\in \mathbb{K}$, consider the sum $S=\sum_{i=1}^n a_i b_{\pi(i)}$ as $\pi$ ranges over the permutations of $[n]$.  We show that this sum always assumes at least $\Omega(n^3)$ distinct values. This ``support'' bound, which is optimal up to the value of the implicit constant, complements recent work of Do, Nguyen, Phan, Tran, and Vu, and of Hunter, Pohoata, and Zhu on the anticoncentration properties of $S$ when $a_1,\ldots,a_n,b_1,\ldots,b_n$ are real and $\pi$ is chosen uniformly at random.
\end{abstract}

\maketitle

\section{Introduction}

Let $\ba=(a_1,\ldots,a_n)$ and $\bb=(b_1,\ldots,b_n)$ be $n$-tuples of elements of a field $\mathbb K$ of characteristic $0$.  For each permutation $\pi$ in the symmetric group $\mathfrak S_n$, define the quantity
$$S(\ba,\bb;\pi):=\sum_{i=1}^n a_i b_{\pi(i)},$$
which is the ``dot product of $\ba,\bb$ twisted by $\pi$''.  Motivated by Littlewood--Offord theory and a geometric problem of Pawlowski~\cite{P24}, Do, Nguyen, Phan, Tran, and Vu \cite{DNPTV25}, and independently Hunter, Pohoata, and Zhu \cite{HPZ26} recently raised the problem of studying the anticoncentration properties of the random variable $S(\ba,\bb;\pi)$ when $\ba$ and $\bb$ are fixed real vectors and $\pi$ is chosen uniformly at random.  One of their main results states that if $a_1,\ldots, a_n\in\mathbb R$ are distinct and $b_1,\ldots,b_n\in\mathbb R$ are distinct, then each possible value of the sum is assumed with probability at most $O(n^{-5/2}\log n)$; this bound is optimal up to the logarithm (which likely can be removed).  The paper \cite{AaronEtAl} has rounded out this small flurry of recent activity.

We are interested in the set
$$\mathcal{S}(\ba, \bb):=\left\{S(\ba,\bb; \pi) : \pi \in \mathfrak S_n\right\}$$
of all possible sums as $\pi$ ranges; this is of course precisely the support of the random variable $S(\ba,\bb;\pi)$.  Our main result states that if $a_1,\ldots,a_n$ are all distinct and $b_1,\ldots,b_n$ are all distinct, then there are at least $\Omega(n^3)$ distinct sums. 

\begin{theorem}\label{thm:main}
There exists an absolute constant $c > 0$ such that the following holds. For every field $\mathbb K$ of characteristic $0$, we have $|\mathcal{S}(\ba,\bb)| \geq c n^3$ for all $\ba, \bb\in\mathbb K^n$ each with distinct entries. 
\end{theorem}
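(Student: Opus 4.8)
The plan is to reduce to the case $\mathbb K=\RR$ and then, for every admissible pair $(\ba,\bb)$, exhibit an explicit family of permutations whose twisted dot products account for $\Omega(n^3)$ distinct values. For the reduction one uses that $|\mathcal S(\ba,\bb)|$ is unchanged when the entries of $\ba$ and of $\bb$ are separately relabeled and when a field embedding is applied, so we may assume $\mathbb K=\mathbb Q(a_1,\dots,a_n,b_1,\dots,b_n)\subseteq\CC$; since $\mathcal S(\ba,\bb)$ is a finite subset of the finite‑dimensional $\mathbb Q$‑vector space spanned by the $a_ib_j$, one composes with a $\mathbb Q$‑linear functional into $\RR$ that separates its (finitely many) pairs of distinct elements. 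This step is routine except for one subtlety—such a functional does not respect the rank‑one shape of the ``matrix'' $(a_ib_j)$—so either the combinatorial argument below must be phrased robustly enough to survive it, or (cleaner) one works over a general field directly via a non‑Archimedean place of $\mathbb K$; I set this aside. Having reduced to $\RR$, relabel so that $a_1<\dots<a_n$ and $b_1<\dots<b_n$.

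The engine is the identity $S(\ba,\bb;\pi\circ(p\,q))-S(\ba,\bb;\pi)=(a_p-a_q)\bigl(b_{\pi(q)}-b_{\pi(p)}\bigr)$. Given $r$ pairwise disjoint pairs of positions $(p_s,p_s')$ and $r$ pairwise disjoint pairs of indices $(q_s,q_s')$, applying the commuting transpositions $(p_s\,p_s')$ ($s\in T$) to a base permutation mapping $p_s\mapsto q_s$ and $p_s'\mapsto q_s'$ shows that $S_0+\sum_{s\in T}w_s\in\mathcal S(\ba,\bb)$ for every $T\subseteq[r]$, where $w_s=(a_{p_s}-a_{p_s'})(b_{q_s'}-b_{q_s})$; swapping the two elements of a pair flips a sign, so we may take every $w_s>0$. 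Thus $|\mathcal S(\ba,\bb)|$ is at least the number of distinct subset sums of $\{w_1,\dots,w_r\}$, and the two matchings—and which $2r$ positions and which $2r$ indices to use—are ours to choose. The problem becomes: build positive reals $w_1,\dots,w_r$ with $r=\Theta(n)$, each a product of a difference of two $a_i$'s with a difference of two $b_j$'s along disjoint pairs, having $\Omega(n^3)$ distinct subset sums. The elementary tool is that for $0<w_1\le\dots\le w_r$ the number of distinct subset sums is at least $(w_1+\dots+w_r)/G$, where $G:=\max\bigl(w_1,\ \max_{2\le j\le r}(w_j-w_1-\dots-w_{j-1})^+\bigr)$ is the largest ``jump''; a profile with $w_s\asymp s^2$ has $\sum_s w_s\asymp r^3 w_1$ and $G=O(w_1)$, hence $\Omega(r^3)=\Omega(n^3)$ subset sums.

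Producing such a profile is where the structure of $\ba$ and $\bb$ enters. The model case $\ba=\bb=(1,\dots,n)$ is immediate: the reversal matching $(a_i,a_{n+1-i})$, $(b_i,b_{n+1-i})$ gives $w_i=(n+1-2i)^2$ for $i\le\lfloor n/2\rfloor$. In general I would split on the gap sequences $a_{i+1}-a_i$ and $b_{j+1}-b_j$: if each contains a run of $\Omega(n)$ consecutive, mutually comparable gaps, then reversing those runs yields two linearly spread families of differences whose pairwise products form a square‑like profile; while if the differences are spread over $\Omega(\log n)$ separated scales, one instead extracts $\Omega(\log n)$ disjoint pairs with super‑increasing products, giving $2^{\Omega(\log n)}$ subset sums, and a more careful accounting—essentially exhibiting a proper rank‑$3$ generalized arithmetic progression inside $\mathcal S(\ba,\bb)$ with generators at three well‑separated scales read off from the two gap sequences—reaches $\Omega(n^3)$. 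A case analysis bridges the intermediate situations.

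I expect the main obstacle to be exactly this uniformity. A careless matching can genuinely fail—if all pairwise differences of $\ba$ and of $\bb$ are comparable, a bad matching produces only $\Theta(n^2)$ subset sums—so the substance of the proof is to show that for \emph{every} admissible $(\ba,\bb)$ one can simultaneously extract compatible ``spread'' structures from $\ba$ and from $\bb$ and align them, always with an absolute constant. The most delicate point should be when $\ba$ and $\bb$ are both close to arithmetic progressions on long sub‑tuples but with incompatible scalings; there one must localize to a common sub‑tuple and argue quantitatively.
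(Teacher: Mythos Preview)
Your reduction to counting subset sums of products of differences is correct and matches the paper's opening move, and the target profile $w_s\asymp s^2$ is the right intuition. But the proposal has two genuine gaps, one of which you already flag and the other of which is the heart of the matter.

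\textbf{The field reduction.} Your $\mathbb Q$-linear functional $\CC\to\RR$ destroys the bilinear structure: the image of $S(\ba,\bb;\pi)$ is not $S(\phi(\ba),\phi(\bb);\pi)$ for any real vectors, so the real case does not apply to it. You note this and set it aside, but it is not a detail---there is no cheap fix. The paper does \emph{not} reduce $\CC$ to $\RR$ in this way. Instead it proves the complex case separately: apply Beck's theorem to $\{a_1,\dots,a_n\}\subseteq\RR^2$. If many $a_i$ lie on a line, an affine change (which does preserve the bilinear structure) makes a linear-size subtuple real, and the real case applies. Otherwise one greedily finds $\Omega(n)$ disjoint pairs whose complex differences $a_{k_i}-a_{j_i}$ point in pairwise distinct directions, so the products $w_i$ are pairwise $\RR$-linearly independent in $\RR^2$; a separate lemma then shows any such set has $\Omega(n^3)$ subset sums.

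\textbf{The real case.} This is where the substantive content lies, and your case analysis is not carried out. The multi-scale branch yields only $2^{\Omega(\log n)}=n^{\Omega(1)}$ subset sums, not $\Omega(n^3)$; the promised ``rank-3 GAP'' is asserted, not constructed; and the bridge between the two regimes is left entirely open. You correctly identify this uniformity as the main obstacle---it is.

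The paper's solution avoids case analysis altogether via a single greedy construction that produces, for \emph{every} real $\ba$, a sequence of $\Omega(n)$ disjoint pairs whose differences $x(1),x(2),\dots$ are \emph{superadditive}: $x(u+v)\ge x(u)+x(v)$. Concretely, at step $s$ one chooses among the unpaired indices a pair $(j,k)$ with exactly $s-1$ unpaired indices strictly between $j$ and $k$, minimizing $a_k-a_j$; superadditivity then falls out of the greedy minimality. Doing the same for $\bb$ gives superadditive $y(i)$, and the products $s(i)=x(i)y(i)$ then satisfy a recursive inequality: superadditivity forces $\sum_{i\le t}s(i)<s(m)$ for $t\approx(3m^2)^{1/3}$, so the number of subset sums $f(m)$ obeys $f(m)\ge f(m-1)+f(t(m))$, which unwinds to $f(m)\gg m^3$. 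This superadditive construction is the missing idea in your proposal; it is what replaces your gap-sequence dichotomy and makes the bound uniform without any case split.
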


Some version of the distinctness condition on $a_1,\ldots,a_n$ and $b_1,\ldots,b_n$ is necessary since otherwise $S(\ba,\bb; \pi)$ may not vary at all as $\pi$ varies.  In the case where $\ba,\bb$ have real coordinates, one can obtain the trivial lower bound $|\mathcal S(\ba,\bb)| \geq 1+\binom{n}{2}$ by first ordering both tuples $\ba,\bb$ in increasing order and then iteratively swapping a pair of adjacent elements of $\bb$ until all elements are in decreasing order.  The bound in \Cref{thm:main} is sharp up to the value of the constant $c$ since $|\mathcal{S}(\ba, \bb)|=(1/6+o(1))n^3$ for $\ba=\bb=(1,2,\ldots, n)$.

To prove \cref{thm:main}, we first handle the special case where $\ba$ and $\bb$ are real vectors, which is already of interest. Our proof is elementary and fairly short.  Pohoata \cite{P26} has recently obtained an independent proof of this special case of \cref{thm:main} using more complicated methods. 

We then prove \cref{thm:main} when $\mathbb K$ is the field $\mathbb C$ of complex numbers. To do so, we invoke Beck's theorem from discrete geometry in order to split into two cases. In the first case, each of $\ba,\bb$ contains a large collinear subset, and we can reduce the argument to the real setting. In the second case, one of $\ba,\bb$ defines many distinct lines, and we can run a different argument based on the $2$-dimensional case of the following theorem, which may be of independent interest. 
\begin{theorem}\label{thm:gamma} 
For each positive integer $d$, there exists a constant $\gamma_d>0$ such that the following holds. If $A \subseteq \mathbb{R}^d$ is an $m$-element set with no linearly dependent $d$-element subsets, then $A$ has at least $\gamma_dm^{d+1}$ distinct subset sums. 
\end{theorem}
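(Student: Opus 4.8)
The plan is to prove \Cref{thm:gamma} by induction on $d$.

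\textbf{Base case $d=1$.} Here $A$ is a set of $m$ nonzero reals, and after negating $A$ if necessary at least $\lceil m/2\rceil$ of its elements are positive, say $0<p_1<\cdots<p_k$ with $k\ge\lceil m/2\rceil$. Put $p_0:=0$ and, for $0\le t\le k$, let $P_t:=p_{k-t+1}+\cdots+p_k$ be the sum of the $t$ largest elements. Consider the subset sums $P_t+p_s$ with $0\le t\le k-1$ and $1\le s\le k-t$. For each fixed $t$ these are distinct (the $p_s$ are distinct) and satisfy $P_t<P_t+p_s\le P_t+p_{k-t}=P_{t+1}$, so the sums at level $t$ all lie in $(P_t,P_{t+1}]$ and different levels land in disjoint intervals. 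This produces $\sum_{t=0}^{k-1}(k-t)=\binom{k+1}{2}=\Omega(m^{2})$ distinct subset sums, which is the claim for $d=1$.

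\textbf{Reductions for the inductive step.} Assume the result for $d-1$ and let $A=\{v_1,\dots,v_m\}\subseteq\mathbb{R}^{d}$ have no linearly dependent $d$-element subset; note this forces $A$ to have no linearly dependent $d'$-element subset for any $d'\le d$. First I would pass to a large, well-behaved subset: choosing a generic linear functional $\varphi$ (so no $\varphi(v_i)$ is $0$ and the $\varphi(v_i)$ are distinct), one of the open half-spaces $\{\varphi>0\},\{\varphi<0\}$ contains a subset $A_+\subseteq A$ with $N:=|A_+|\ge m/2$; since $\Omega(N^{d+1})=\Omega(m^{d+1})$ this costs only the constant, and after relabelling we may assume $A=\{v_1,\dots,v_N\}$ with $0<\varphi(v_1)<\cdots<\varphi(v_N)$. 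Second, I would project: there is a line $L\subseteq\mathbb{R}^{d}$ such that the quotient map $\pi\colon\mathbb{R}^{d}\to\mathbb{R}^{d}/L\cong\mathbb{R}^{d-1}$ is injective on $A$ and $\pi(A)$ still has no linearly dependent $(d-1)$-element subset. Indeed $\pi(v_{i_1}),\dots,\pi(v_{i_{d-1}})$ are linearly dependent exactly when $L$ is contained in the hyperplane $\operatorname{span}(v_{i_1},\dots,v_{i_{d-1}})$, and $\pi$ identifies $v_i$ with $v_j$ exactly when $L$ is parallel to $v_i-v_j$; each of these finitely many conditions cuts out a lower-dimensional set of lines, so a generic $L$ avoids all of them. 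By the inductive hypothesis $|\mathcal{S}(A)|\ge|\mathcal{S}(\pi(A))|\ge\gamma_{d-1}N^{d}$, which has the right shape but is one factor of $N$ short.

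\textbf{Recovering the last factor of $N$.} The remaining task is to amplify $\Omega(N^{d})$ to $\Omega(N^{d+1})$, and the natural mechanism is a staircase in the $\varphi$-direction layered on top of the $(d-1)$-dimensional bound. For $0\le t\le N$ write $T_t=\{v_{N-t+1},\dots,v_N\}$, $P_t=\sum_{v\in T_t}v$, and $R_t=\{v_1,\dots,v_{N-t}\}$; for every $C\subseteq R_t$ one has $\varphi\bigl(P_t+\sum_{v\in C}v\bigr)=\varphi(P_t)+\varphi\bigl(\sum_{v\in C}v\bigr)$. The goal is to choose $\Omega(N)$ levels $t$ and, for each, a family $\mathcal{C}_t$ of subsets $C\subseteq R_t$ such that (a) the sums $P_t+\sum_{v\in C}v$ over $C\in\mathcal{C}_t$ realise $\Omega(N^{d})$ distinct values — which one would extract from the inductive $(d-1)$-dimensional bound applied to $\pi(R_t)$ — while (b) $\varphi\bigl(\sum_{v\in C}v\bigr)$ stays in a window short enough that distinct levels occupy disjoint $\varphi$-intervals. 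Summing over the levels would then give $\Omega(N)\cdot\Omega(N^{d})=\Omega(N^{d+1})$.

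\textbf{The main obstacle.} The hard part is precisely the tension between (a) and (b): enlarging $\mathcal{C}_t$ forces $\varphi\bigl(\sum_{v\in C}v\bigr)$ to grow and wrecks the separation in (b). I expect this is where the real work is, and I would attack it by choosing the levels and thresholds adaptively rather than uniformly — splitting the range of $\varphi$ at a threshold chosen so that the upper piece is $P+\{\text{subset sums of a sub-configuration with controlled }\varphi\text{-values}\}$ for a fixed prefix sum $P$, while the lower piece is a strictly smaller instance of the same counting problem, and then recursing. The content is to balance this recursion so that it compounds into a genuine extra factor of $N$ rather than a merely subpolynomial improvement over the bound $\gamma_{d-1}N^{d}$ already in hand. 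This adaptive step — together with the inductive hypothesis and the fact that, by general position, subset sums of $A$ never coincide across Hamming distance at most $d$ — is the only place the full strength of the hypothesis is needed.
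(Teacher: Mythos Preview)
Your base case is correct and matches the paper's. Your reductions (passing to a half-space, projecting to $\mathbb{R}^{d-1}$ along a generic line) are also sound. But the proof is genuinely incomplete: you identify the amplification from $\Omega(N^d)$ to $\Omega(N^{d+1})$ as the crux, correctly diagnose the tension between (a) and (b), and then stop. The adaptive-recursion sketch at the end is not an argument; as written it gives no reason to believe the recursion balances to an extra full factor of $N$ rather than, say, a logarithm.

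The paper resolves exactly this tension with a simple device you are close to but miss. Instead of choosing a generic functional $\varphi$ and a separate generic projection line $L$, project along a single element $w^*\in A$. A pigeonhole over pairs shows some $w^*$ has at most $(m-1)/2$ pairs $\{u,v\}\subseteq A\setminus\{w^*\}$ with $v-u$ parallel to $w^*$, so the projection $A'$ of $A\setminus\{w^*\}$ onto $H=(w^*)^\perp$ has $|A'|\ge(m-1)/2$ and (by the hypothesis on $A$) no linearly dependent $(d-1)$-subsets. Now for each of the $\ge g_{d-1}(\lceil (m-1)/2\rceil)$ subset sums $u'$ of $A'$, take the preimage subset sum $u$ of $A\setminus\{w^*\}$ with maximal $w^*$-component; then $u+w^*$ is a subset sum of $A$ that projects to $u'$ but is \emph{not} a subset sum of $A\setminus\{w^*\}$. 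This yields the recurrence
\[
g_d(m)\ \ge\ g_d(m-1)+g_{d-1}\bigl(\lceil (m-1)/2\rceil\bigr),
\]
and summing gives $g_d(m)=\Omega(m^{d+1})$.

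The point is that making the projection direction an \emph{element of $A$} collapses your two-parameter staircase problem to a one-step ``add $w^*$ to the top of each fiber'' move, so no window-management in the $\varphi$-direction is needed at all: you only need \emph{one} new subset sum per fiber per step, and you iterate in $m$ rather than in $t$. This is the missing idea.
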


We deduce the full version of \cref{thm:main} from the complex case using the fact that every finitely generated field extension of $\mathbb Q$ embeds into $\mathbb C$. 

\section{Real Numbers}\label{sec:reals} 
In this section, let $\ba,\bb \in \mathbb{R}^n$ be $n$-tuples each with distinct entries.
Write $S_0:=S(\ba,\bb; \id)$, where $\id$ denotes the identity permutation.  For the permutation $\pi=(j,k)$ that swaps $j,k$ and fixes all elements of $[n]\setminus\{j,k\}$, we can compute
$$S(\ba,\bb; (j,k))=S_0+a_j(b_k-b_j)+a_k(b_j-b_k)=S_0-(a_k-a_j)(b_k-b_j).$$
In general, if $\pi=(j_1,k_1) \cdots (j_r,k_r)$ is a product of disjoint transpositions, then we obtain
$$S(\ba,\bb;(j_1,k_1) \cdots (j_r,k_r))=S_0-\sum_{i=1}^r (a_{k_i}-a_{j_i})(b_{k_i}-b_{j_i}).$$
Thus, to prove \Cref{thm:main}, it suffices to find disjoint transpositions $(j_1,k_1), \ldots, (j_r,k_r)$  with the property that the set
\begin{equation}\label{eq:differences}
\{(a_{k_1}-a_{j_1})(b_{k_1}-b_{j_1}), \ldots, (a_{k_r}-a_{j_r})(b_{k_r}-b_{j_r})\}
\end{equation}
has $\Omega(n^3)$ distinct subset sums.  We are of course free to reorder the entries of $\ba$ and of $\bb$ (possibly in different ways) before specifying our family of disjoint transpositions.

In general, a set of $m$ real numbers can have as few as $O(m^2)$ distinct subset sums (one example is $\{1, \dots, m\}$). To get a cubic lower bound for the set \eqref{eq:differences}, we will choose transpositions that capture the behavior of $\ba$ and $\bb$ at many different scales.

\subsection{Constructing the involutions}\label{subsec:involutions} 

The first step is identifying a family of disjoint pairs $(j_i,k_i)$ of elements of $[n]$ such that the corresponding differences $a_{k_i}-a_{j_i}$ ``see'' the various scales of $\ba$, and likewise for $\bb$.  We achieve this iteratively, as follows.

Without loss of generality, we may assume that $a_1<\cdots<a_n$.  To begin, among all choices of $1 \leq j<k \leq n$, choose a pair minimizing the quantity $a_k-a_j$, and let $(j_1,k_1)$ denote this pair. Suppose we have already obtained disjoint pairs $(j_1, k_1), \ldots, (j_s, k_s)$  (with each $j_i<k_i$).  Let $\mathcal{U}_s$ be the set of $n-2s$ unpaired indices of $[n]$.  Among all choices of $j<k$ in $\mathcal{U}_s$ with
$$|\{j+1, \dots, k-1\}\cap \mathcal{U}_s| = s-1,$$ 
choose one that minimizes $a_k-a_j$, and let $(j_{s+1},k_{s+1})$ denote this pair; if there are no such pairs, then halt the procedure.

Suppose that the procedure produced the pairs $(j_1,k_1), \ldots, (j_r,k_r)$ and then halted.  Since it failed to produce $(j_{r+1},k_{r+1})$, we must have had $n-2r=|\mathcal{U}_r|\leq r+1$, i.e., $r\geq (n-1)/3$.  Thus, our process always produces a linear number of pairs.  For each $1 \leq i \leq r$, write $x(i):=a_{k_i}-a_{j_i}$.  The crucial property of the sequence $x(1), \ldots, x(r)$ (besides its length) is its superadditivity.

\begin{lemma}[Superadditivity]
    We have $x(u+v) \geq x(u) + x(v)$ for all $1\leq u, v\leq r$ with $u+v\leq r$.
\end{lemma}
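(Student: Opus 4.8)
The plan is to prove the bound by \textbf{splitting the interval}: I would produce an unpaired index $c$ lying strictly between $j_{u+v}$ and $k_{u+v}$ for which $a_c-a_{j_{u+v}}\ge x(u)$ and $a_{k_{u+v}}-a_c\ge x(v)$, and then simply add these two inequalities, using $x(u+v)=a_{k_{u+v}}-a_{j_{u+v}}=(a_c-a_{j_{u+v}})+(a_{k_{u+v}}-a_c)$. The cut point $c$ will be chosen as the $u$-th unpaired index strictly between $j_{u+v}$ and $k_{u+v}$, counted from the left, so that the sub-pair $(j_{u+v},c)$ is the type of pair one minimizes over when defining $(j_u,k_u)$, and the sub-pair $(c,k_{u+v})$ is the type one minimizes over when defining $(j_v,k_v)$.

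The engine behind both halves is a \textbf{shrinking lemma}: if $j<k$ both belong to $\mathcal{U}_t$ and the number of elements of $\mathcal{U}_t$ strictly between $j$ and $k$ is at least the number of unpaired indices that the construction forces the pair selected at the $(t{+}1)$-st stage to straddle, then $a_k-a_j\ge x(t+1)$. Indeed, if that count equals exactly the required number, then $(j,k)$ is one of the pairs over which we minimized when defining $(j_{t+1},k_{t+1})$, so the bound is immediate. If the count is strictly larger, let $d$ be the element of $\mathcal{U}_t$ with exactly the required number of $\mathcal{U}_t$-elements strictly between $j$ and $d$; then $(j,d)$ \emph{is} such a minimization candidate, so $a_d-a_j\ge x(t+1)$, and $a_k-a_j\ge a_d-a_j$ since $k>d$. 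The degenerate case (no interior unpaired index required) reduces to the obvious fact that every difference $a_k-a_j$ with $j<k$ is at least the minimal consecutive gap $x(1)$.

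To finish, set $p=u+v$, let $c_1<c_2<\cdots$ enumerate the unpaired (relative to $\mathcal{U}_{p-1}$) indices strictly between $j_p$ and $k_p$, and take $c:=c_u$. Since $\mathcal{U}_{p-1}\subseteq\mathcal{U}_{u-1}$, the open interval between $j_p$ and $c$ contains at least as many elements of $\mathcal{U}_{u-1}$ as of $\mathcal{U}_{p-1}$; the latter count is $u-1$, which is exactly the threshold needed to apply the shrinking lemma at the $u$-th stage, so $a_c-a_{j_p}\ge x(u)$. Symmetrically, using $\mathcal{U}_{p-1}\subseteq\mathcal{U}_{v-1}$ and the $v-1$ elements $c_{u+1},c_{u+2},\ldots$ lying on the other side of $c$, the shrinking lemma at the $v$-th stage gives $a_{k_p}-a_c\ge x(v)$. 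Adding the two inequalities yields $x(u+v)\ge x(u)+x(v)$.

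I expect the \textbf{main obstacle} to be precisely the bookkeeping in this last paragraph: the minimality defining $x(u)$ is relative to the \emph{larger} set $\mathcal{U}_{u-1}$ rather than $\mathcal{U}_{p-1}$, so one must check that an index which is unpaired late in the construction was also unpaired (hence counted) earlier, and that the enumeration thresholds line up so that the single choice $c=c_u$ simultaneously makes the left sub-pair a legal candidate at stage $u$ and the right sub-pair a legal candidate at stage $v$. Pinning down these counts exactly (and handling the boundary cases $u=1$ or $v=1$, where one side straddles no required interior unpaired index and the shrinking lemma is invoked in its minimal-gap form) is the one genuinely delicate point; everything else is immediate from the defining minimality of the pairs $(j_i,k_i)$.
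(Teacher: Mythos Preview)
Your proposal is correct and essentially identical to the paper's proof: both enumerate the unpaired indices lying strictly between $j_{u+v}$ and $k_{u+v}$, cut at the $u$-th one (the paper's $i_u$, your $c_u$), and then invoke the minimality defining the $u$-th and $v$-th pairs on the two resulting sub-intervals. Your explicit ``shrinking lemma'' is exactly what the paper compresses into the single phrase ``the greedy nature of our inductive procedure guarantees \ldots''; you have simply spelled out that step in full.
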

\begin{proof}
    Write $i_{1} < i_2 < \cdots < i_{u+v-1}$ for the elements of $\mathcal{U}_{u+v}$ between $j_{u+v}$ and $k_{u+v}$. All of these elements are also in $\mathcal{U}_{u-1}$ and $\mathcal{U}_{v-1}$, so the greedy nature of our inductive procedure guarantees that $x(u) \leq a_{i_u} - a_{j_{u+v}}$ and $x(v) \leq a_{k_{u+v}} - a_{i_u}$, whence $x(u) + x(v) \leq a_{k_{u+v}} - a_{j_{u+v}} = x(u+v)$.
\end{proof}

In the same way we obtain a superadditive sequence $y(1), \ldots, y(r')$ from $\bb$, again with $r'=\Omega(n)$. It remains to show that the set of numbers $s(i) := x(i) y(i)$, for $i$ between $1$ and $\min(r, r') = \Theta(n)$, has $\Omega(n^3)$ distinct subset sums. 

\subsection{Subset sums for dot products of superadditive sequences}

For each $m\geq 1$, let $f(m)$ denote the minimum possible number of distinct subset sums of $\{s(1), \ldots, s(m)\}$, where $x(1),\ldots,x(m)$ and $y(1), \dots, y(m)$ are superadditive sequences of positive real numbers and $s(i):=x(i)y(i)$.  In light of the preceding discussion, the reader will be unsurprised to see that our main technical lemma takes the following form. 

\begin{lemma}\label{lem:key}
    There is a constant $\kappa>0$ such that $f(m)\geq \kappa m^3$ for all $m\geq 1$. 
\end{lemma}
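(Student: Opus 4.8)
The plan is to make two harmless reductions and then attack the heart of the matter, a ``near‑completeness'' statement for the subset sums. Since scaling $x$ (or $y$) by a positive constant multiplies every $s(i)$ by that constant and so does not change the number of subset sums, I may assume $x(1)=y(1)=1$; superadditivity then forces $x(i)\ge i$ and $y(i)\ge i$, hence $s(i)=x(i)y(i)\ge i^2$, and moreover $s$ is itself superadditive (indeed $s(a+c)\ge s(a)+s(c)+x(a)y(c)+x(c)y(a)$) and strictly increasing. Also $f$ is non‑decreasing in $m$, since a prefix of a superadditive sequence is superadditive and passing to a prefix only shrinks the set of subset sums, so it suffices to prove the bound when $m$ is a power of $2$.

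The basic tool is an elementary gluing lemma: if $0<v_1\le\cdots\le v_p$ satisfy $v_1\le g$ and $v_{\ell+1}\le g+(v_1+\cdots+v_\ell)$ for all $\ell$, then every point of $[0,\sum_\ell v_\ell]$ lies within $g$ of a subset sum, so there are at least $\bigl(\sum_\ell v_\ell\bigr)/(2g)$ of them; this is proved by induction on $p$, since adjoining $v_p$ abuts $[v_p,\sum_\ell v_\ell]$ to $[0,\sum_{\ell<p}v_\ell]$. If this applied to $\{s(1),\dots,s(m)\}$ with a bounded $g$, we would be done, because $\sum_i s(i)\ge\sum_i i^2=\Theta(m^3)$. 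The obstruction is that $s$ may have large ``jumps''. Call $\ell$ a jump if $s(\ell+1)>2\sum_{j\le\ell}s(j)$. If there are at least $3\log_2 m$ jumps, the associated elements $s(\ell+1)$ form a super‑increasing sequence — each exceeds twice the sum of all earlier ones — and hence already have $2^{3\log_2 m}=m^3$ distinct subset sums, so we are done in that case.

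In the remaining case the at most $3\log_2 m$ jumps cut $[m]$ into ``runs'' $R_0,R_1,\dots$, and within each run there is no jump. For the slow (jump‑free) behaviour I would either run a version of the gluing lemma internally — using $s(b+c)\ge s(b)+x(b)y(c)+x(c)y(b)+s(c)$, with $b$ the first index of the run, to control both the sizes and the consecutive gaps of the $s(i)$ in the run in terms of the single scale $s(b)$, which yields $\gtrsim\bigl(\sum_{i\in R}s(i)\bigr)/s(b)$ subset sums from a run $R$ — and then amalgamate the runs, exploiting that the first element $s(b_k)$ of $R_k$ exceeds the entire sum $\sum_{i\in R_{k-1}}s(i)$ of the previous run, so that the contributions chain together to recover $\gtrsim\sum_i s(i)=\Theta(m^3)$. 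An alternative framework for the slow regime, which may be cleaner, is the incremental bound $f(m)\ge 1+\sum_{i=1}^m c_i$, where $c_i$ is the number of subset sums of $\{s(1),\dots,s(i-1)\}$ that are $<s(i)$ (obtained from $|A\cup B|=2\lvert A\rvert-\lvert A\cap B\rvert$ applied to the two translates of the set of subset sums of $\{s(1),\dots,s(i-1)\}$); here it suffices to show $c_i\gtrsim i^2$, i.e.\ that $\{s(1),\dots,s(i-1)\}$ has $\gtrsim i^2$ subset sums below $s(i)\ge i^2$.

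The step I expect to be the real obstacle is precisely this: converting ``no large jumps, plus the product lower bound $s(i)\ge i^2$'' into a quantitatively correct syndeticity statement — morally, that sums of distinct elements of a set whose $i$‑th smallest element is $\gtrsim i^2$ and which does not grow too quickly are syndetic, with a bounded gap, up to the cubic scale $\sum_i s(i)$ — together with cleanly amalgamating the runs when there are a few jumps of intermediate size, a regime where neither purely additive composition (which only gives $\Omega(m^2)$) nor the purely multiplicative super‑increasing argument suffices. It is here that one must genuinely use that $s$ is a product of two superadditive sequences and not merely superadditive: a superadditive sequence such as $s(i)=i$ has only $\Theta(m^2)$ subset sums, so no argument resting on superadditivity of $s$ alone can possibly work.
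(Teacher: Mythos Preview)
Your proposal is not a proof, and you say so yourself: the entire ``slow'' (few-jumps) regime is left as an acknowledged obstacle. The many-jumps case is fine, but for the remaining case you only gesture at two mechanisms --- a per-run gluing estimate $\gtrsim(\sum_{i\in R}s(i))/s(b)$, or the incremental bound $c_i\gtrsim i^2$ --- without carrying either out. Both face real difficulties. The no-jump condition $s(\ell+1)\le 2\sum_{j\le\ell}s(j)$ is strictly weaker than the hypothesis of your gluing lemma with any bounded $g$, and the extra inequality $s(b+c)\ge s(b)+x(b)y(c)+x(c)y(b)+s(c)$ does not control consecutive gaps within a run. Worse, the per-run bound $(\sum_{i\in R}s(i))/s(b)$ can be as small as $|R|$ (take $s(b)$ enormous and the run short), so even granting it, the amalgamation step would have to be multiplicative across runs and you have not justified that different subset-sum profiles across runs yield distinct totals beyond the easy ``which runs are nonempty'' level. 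Your closing paragraph is exactly right that the product structure must be used in an essential way here, and nothing in your sketch yet does so.

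The paper avoids the jump/run dichotomy entirely. Its one key claim is that $\sum_{i\le t}s(i)<s(m)$ whenever $t\sim(3m^2)^{1/3}$, and this is where the product structure is genuinely exploited: first superadditivity of $x$ gives $x(i)\le(1+o(1))(i/m)\,x(m)$, reducing to a weighted sum $\sum_i i\,y(i)$; then a greedy partition of the index-multiset $\{1,2,2,3,3,3,\dots\}$ into blocks of total weight $\le m$ lets superadditivity of $y$ bound that sum by roughly $(t^3/3m)\,y(m)$. The claim immediately yields the recurrence $f(m)\ge f(m-1)+f(t(m))$ --- the $f(t(m))$ new sums arise by deleting subsets of $\{s(1),\dots,s(t(m))\}$ from the full sum --- and a short analysis gives $f(m)\gg m^3$. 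In your $c_i$ language this is exactly a proof that $c_m\ge f(t(m))$, so the paper is supplying precisely the missing ingredient you identified, but via a direct inequality rather than any syndeticity or density statement.
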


\begin{proof}
Let $x(1),\ldots,x(m)$ and $y(1),\ldots,y(m)$ be superadditive sequences of positive reals, and set $s(i):=x(i)y(i)$. The key claim is that there exists $t(m)=(1- O(m^{-1/3}))\cdot (3m^2)^{1/3}$ such that 
\begin{equation*}\label{eq:t}
\sum_{i=1}^{t(m)} s(i) < s(m).
\end{equation*}
Before proving this claim, let us see how it implies the conclusion of the lemma.

All subset sums of $\{s(1), \dots, s(m-1)\}$ are also subset sums of $\{s(1), \dots, s(m)\}$. Moreover, for each $S\subseteq \{1, \dots, t(m)\}$, the subset sum
$$\sum_{i\in \{1, \dots, m\}\setminus S} s(i)= \sum_{i=1}^{m-1} s(i) - \sum_{j\in S} s(j) + s(m)$$ is
larger than $s_1 + \cdots + s_{m-1}$, so it is a subset sum of $\{s(1),\ldots,s(m)\}$ that is not a subset sum of $\{s(1),\ldots,s(m-1)\}$. By varying $S$, we obtain at least $f(t(m))$ new subset sums, so
\begin{equation}\label{eq:f_recurrence}
f(m)\geq f(m-1) + f(t(m)).
\end{equation} 

Let $\kappa_m:=m^{-3}f(m)$. We show that $\kappa_m$ is uniformly bounded away from $0$. By the recurrence~\eqref{eq:f_recurrence}, 
\begin{align*}
    f(m) &\geq f(m-1) + f(t(m)) \\
    &\geq \kappa_{m-1} (m-1)^3 + \kappa_{t(m)} \left((1-O(m^{-1/3})) \cdot (3m^2)^{1/3}\right)^3 \\
    &\geq \min\left\{\kappa_{m-1}, (1-O(m^{-1/3}))\cdot \kappa_{t(m)} \right\} \cdot ((m-1)^3 + 3m^2) \\
    &\geq \min\left\{\kappa_{m-1}, \exp\left(-O(m^{-1/3})\right)\cdot \kappa_{t(m)} \right\}\cdot m^3,
\end{align*}
so 
$$\kappa_{m}\geq \min\left\{\kappa_{m-1}, \exp\left(-O(m^{-1/3})\right)\cdot \kappa_{t(m)}\right\}.$$
Thus, even if $\kappa_1, \kappa_2, \kappa_3, \dots$ does decrease, it does so \textit{extremely} slowly.  Concretely, there is some natural number $m^*$ such that for any $m$, the sequence $m_0:=m, ~ m_1:=t(m_0), ~m_2:=t(m_1), ~\ldots$ eventually reaches some $m_q \leq m^*$.  In this case we can bound
$$\kappa_m \geq \prod_{i=0}^{q-1} \exp\left(-O(m_i^{-1/3})\right) \cdot \min_{1\leq m'\leq m^*} \kappa_{m'}\gg \exp\left(-\sum_{i=0}^{q-1} O(m_i^{-1/3})\right).$$
As $i$ decreases, the sequence $m_i$ grows doubly-exponentially, so the product on the right-hand side is uniformly bounded below by some $\kappa>0$.

It remains to prove the main claim.  The intuition is that for $m=\left(\sum_{i=1}^t i^2\right)^{1/2}\approx (t^3/3)^{1/2}$, two ``fractional'' applications of superadditivity morally give the chain of inequalities
$$\sum_{i=1}^{t} x(i)y(i) \leq \sum_{i=1}^t \frac{i}{m} x(m) \cdot y(i) \leq x(m) y\left(\sum_{i=1}^t \frac{i^2}{m}\right) = x(m)y(m).$$
To make this heuristic precise, we must keep our applications of superadditivity ``in the integers''.

The claimed asymptotic for $t=t(m)$ is equivalent to $m=(1+O(t^{-1/2}))(t^3/3)^{1/2}$.  Set $$M=M(t):=(1+Ct^{-1/2})(t^3/3)^{1/2},$$ for a universal constant $C>0$ to be specified later; we will establish the claim by showing that $\sum_{i=1}^t s(i)< s(M)$.  Notice that for $1\leq i\leq t$, superadditivity gives
$$x(M) \geq \lfloor M/i \rfloor \cdot x(i)=(1+O(t^{-1/2})) \frac{M}{i} \cdot x(i)$$
(using $M/i \gg t^{1/2}$ to bound the error term).  Thus (due to the positivity of the $y(i)$'s) we have
$$\sum_{i=1}^t x(i)y(i) \leq \sum_{i=1}^t (1+O(t^{-1/2})) \frac{i}{M} \cdot x(M)y(i)=(1+O(t^{-1/2})) M^{-1} x(M) \sum_{i=1}^t iy(i).$$
We can write the last sum as
\begin{equation}\label{eq:partition}
\underbrace{y(1)}_{1\text{ term}}+\underbrace{y(2)+y(2)}_{2\text{ terms}}+\cdots+\underbrace{y(t)+\cdots+y(t)}_{t\text{ terms}}.
\end{equation}
With this in mind, consider the sequence with $1$ term equal to $1$, then $2$ terms equal to $2$, and so on, up to $t$ terms equal to $t$.  Using a greedy algorithm, we partition this sequence into several subsequences each with sum between $M-t+1$ and $M$, and one final subsequence with sum at most $M$; this partition has at most
$$\frac{\sum_{i=1}^t i^2}{M-t+1} + 1=(1+O(t^{-1/2})) \frac{t^3/3}{M}$$
parts.  For each part of the partition, the  corresponding terms in \eqref{eq:partition} sum to at most $y(M)$ by superadditivity.  In total, we obtain
$$\sum_{i=1}^t x(i) y(i) \leq (1+O(t^{-1/2}))M^{-1}x(M) \cdot (1+O(t^{-1/2})) \frac{t^3/3}{M} \cdot y(M)=\frac{1+O(t^{-1/2})}{(1+Ct^{-1/2})^2}x(M)y(M).$$
The right-hand side is at most $x(M)y(M)$ as long as $C$ is sufficiently large.  This establishes the claim and completes the proof of the lemma.
\end{proof} 

Applying \cref{lem:key} to the subsequences extracted in \cref{subsec:involutions} (with $m:=\min(r,r')$) completes the proof of \cref{thm:main} in the special case where the entries of $\ba$ and $\bb$ are real. 

\begin{remark}
    The proof of \cref{lem:key}  can be generalized to show that for if $\{x_i(1), \ldots, x_i(m)\}_{1 \leq i \leq d}$ are superadditive sequences of positive reals, then the numbers $s(i):=x_1(i)x_2(i)\cdots x_d(i)$ (for $1 \leq i \leq m$) has $\Omega_d(m^{d+1})$ distinct subset sums.
It is not clear to us if this has an interpretation in terms of the minimum possible size of the set of permutation-twisted dot products
    $$S(\mathbf{a}_1, \dots, \mathbf{a}_d) = \left\{\sum_{i=1}^n a_{1, \pi_1(i)} a_{2, \pi_2(i)}\cdots a_{d, \pi_d(i)} : \pi_1, \pi_2, \dots, \pi_d\in \mathfrak S_n\right\}$$
    when each $\mathbf{a}_i = (a_{i, 1}, \dots, a_{i, n})$ has distinct coordinates. Nevertheless, we conjecture that, generalizing our result when $d=2$, such a set always has size $\Omega_d(n^{d+1})$. 
\end{remark}  

\section{Complex Numbers} 
In this section, we prove \cref{thm:gamma} and use it to establish \cref{thm:main} when $\mathbb K=\mathbb C$. 

\begin{proof}[Proof of \cref{thm:gamma}]
Let $g_d(m)$ be the minimum possible number of distinct subset sums of an $m$-element subset of $\mathbb R^d$ with no $d$-element linearly dependent subsets.  We proceed by induction on $d$.  For $d=1$, a simple inductive argument (as in, e.g., the proof of \cite[Theorem 3]{N95}) shows that every set of $r$ real numbers of the same sign has at least $r(r+1)/2+1$ distinct subset sums. Among any set of nonzero real numbers, at least half have the same sign, so $g_1(m)\geq m^2/4$. 

We now describe the induction step.  Let $d \geq 2$, and assume that there exists $\gamma_{d-1}>0$ such that $g_{d-1}(m)\geq\gamma_{d-1}m^d$ for all $m$. 
We will show that 
\begin{equation*}\label{eq:g_recurrence} 
g_d(m)\geq g_{d}(m-1)+g_{d-1}(\left\lceil(m-1)/2\right\rceil)
\end{equation*} 
for all $m\geq d$, which will complete the proof. 

Fix $m\geq d$, and let $A\subseteq \mathbb R^d$ be an $m$-element set with no $d$-element linearly dependent subsets. The condition $m\geq d$ ensures that no element of $A$ is a scalar multiple of any other, as otherwise any $d$-element subset containing both would be linearly dependent. For each $w\in A$, let $\mathcal A_w$ be the collection of $2$-element sets $\{u,v\}\subseteq A\setminus\{w\}$ such that $v-u$ is a scalar multiple of $w$. Note that no pair $\{u,v\}$ can belong to $\mathcal A_w$ and $\mathcal A_{w'}$ for distinct $w,w'$ (since $w$ and $w'$ are linearly independent). Hence, there is some $w^*\in A$ such that $|\mathcal A_{w^*}|\leq \binom{m}{2}/m=(m-1)/2$. 

Let $H$ be orthogonal complement of $w^*$, and let $A'\subseteq H$ be the orthogonal projection of $A\setminus\{w^*\}$ onto $H$. The choice of $w^*$ guarantees that $|A'|\geq m-1-|\mathcal A_{w^*}|\geq (m-1)/2$. Because no $d$-element subset of $A$ is linearly dependent, we know that no $(d-1)$-element subset of $A'$ is linearly dependent. Thus, the induction hypothesis gives that $A'$ has at least $g_{d-1}(\left\lceil(m-1)/2\right\rceil)$ distinct subset sums. For each subset sum $u'$ of $A'$, there is a subset sum $u$ of $A\setminus\{w^*\}$ such that $u'$ is the orthogonal projection of $u$ onto $H$ and such that $u+w^*$ is not a subset sum of $A\setminus\{w^*\}$. It follows that the number of distinct subset sums of $A$ is at least $g_{d}(m-1)+g_{d-1}(\left\lceil(m-1)/2\right\rceil)$, as desired. 
\end{proof}

The final ingredient is Beck's Theorem~\cite{B83} (a consequence of the Szemer\'edi--Trotter Theorem) from discrete geometry. 

\begin{theorem}[\cite{B83}] \label{thm:beck}
There is a constant $C>0$ such that for every set $X \subseteq \mathbb{R}^2$ of $n$ points, one of the following holds: 
\begin{itemize}
\item Some line contains at least $n/C$ points in $X$. 
\item There is a collection of $n^2/C$ lines each containing at least two points of $X$. 
\end{itemize} 
\end{theorem}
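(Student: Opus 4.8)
The plan is to derive Beck's theorem from the Szemer\'edi--Trotter incidence bound by a dyadic pigeonholing argument on the ``richness'' of lines. Fix $X \subseteq \mathbb R^2$ with $|X| = n$, and suppose we are in the case where \emph{no} line meets $X$ in at least $n/C$ points; the goal is then to exhibit at least $n^2/C$ lines each meeting $X$ in at least two points. We are free to take $C$ large, and a single large $C$ will handle both horns of the dichotomy. The accounting device throughout is that $X$ determines exactly $\binom n2$ point-pairs, each lying on a unique connecting line, and a connecting line through $j$ points of $X$ carries exactly $\binom j2$ of these pairs.

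First I would record the standard consequence of Szemer\'edi--Trotter: for each integer $k \ge 2$, the number of lines containing at least $k$ points of $X$ is $O(n^2/k^3 + n/k)$ (apply the incidence bound $I(P,L) = O(|P|^{2/3}|L|^{2/3} + |P| + |L|)$ with $P = X$ and $L$ the family of such lines, noting each contributes at least $k$ incidences, and split into the three cases according to which term dominates). Next I would bound the number of point-pairs lying on ``rich'' lines, those meeting $X$ in at least a constant number $k_0 \ge 2$ of points. Decomposing dyadically, the lines meeting $X$ in between $2^\ell$ and $2^{\ell+1}$ points number $O(n^2/2^{3\ell} + n/2^\ell)$, and each carries $O(2^{2\ell})$ pairs, so together they carry $O(n^2/2^\ell + n\,2^\ell)$ pairs. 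Summing over the scales with $k_0 \le 2^\ell < n/C$ (the upper cutoff being exactly where the no-rich-line hypothesis bites), the $n^2/2^\ell$ contributions are dominated by the smallest scale, totalling $O(n^2/k_0)$, while the $n\,2^\ell$ contributions are dominated by the largest scale, totalling $O(n \cdot n/C) = O(n^2/C)$. Hence at most $c_1 n^2/k_0 + c_2 n^2/C$ of the point-pairs lie on lines of richness $\ge k_0$, for absolute constants $c_1, c_2$.

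Now I would fix $k_0$ to be a large enough absolute constant and then $C$ large enough (relative to $k_0$) that $c_1 n^2/k_0 + c_2 n^2/C \le \tfrac12\binom n2$; then at least $\tfrac12\binom n2$ of the point-pairs lie on connecting lines of richness in $\{2, \ldots, k_0-1\}$. Since each such line carries fewer than $\binom{k_0}2$ pairs, there are at least $\binom n2 / \big(2\binom{k_0}2\big)$ of them, which is $\ge n^2/C$ after enlarging $C$ once more if needed; the finitely many small values of $n$ can be absorbed into $C$ as well (for $n \ge 2$ there is always at least one connecting line). This completes the argument.

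The step I expect to demand the most care is the bookkeeping of $C$, which plays a dual role --- the richness threshold defining the first horn of the dichotomy, and the quantitative bound in the second horn --- so one must verify that a single choice simultaneously caps the top dyadic scale, renders the rich-pair count below $\tfrac12\binom n2$, and yields the $n^2/C$ lower bound. A secondary point is pinning down the Szemer\'edi--Trotter corollary in exactly the form used, keeping the $n/k$ term (it governs the large-$k$ end of the dyadic sum), and dispatching the degenerate ranges (such as $k_0 \ge n/C$, where there are no rich lines at all and the conclusion is immediate).
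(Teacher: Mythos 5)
Your argument is correct: this is the standard derivation of Beck's theorem from the Szemer\'edi--Trotter incidence bound via dyadic pigeonholing on line richness, and your bookkeeping works out --- all the constraints on $C$ are monotone (enlarging $C$ only strengthens the no-rich-line hypothesis, shrinks the top dyadic scale, and weakens the required $n^2/C$ bound), the degenerate range $k_0 \ge n/C$ and the finitely many small $n$ are handled as you indicate. The paper itself gives no proof of this statement --- it quotes the theorem from Beck's paper and merely remarks that it is a consequence of Szemer\'edi--Trotter --- so your write-up is exactly the argument being alluded to, not a divergence from it.
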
 

We can now complete the proof of our main result over complex numbers. 

Let $\ba=(a_1,\ldots,a_n)$ and $\bb=(b_1,\ldots,b_n)$ be $n$-tuples of complex numbers such $a_1,\ldots,a_n$ are distinct and $b_1,\ldots,b_n$ are distinct.  Identifying $\ba,\bb$ with subsets of $\mathbb{R}^2$ and applying Beck's Theorem leads to the following two cases (with the constant $C$ from \Cref{thm:beck}).

\medskip 
\noindent {\bf Case 1.} Suppose that there are collinear subsets $\ba' \subseteq \ba$ and $\bb' \subseteq \bb$ each of size $n/C$.  Without loss of generality, we may assume that $\ba'=(a_1, \ldots, a_{n/C})$ and $\bb'=(b_1, \ldots, b_{n/C})$.  The size of the set $S(\ba,\bb)$ remains unchanged if we shift all of the elements of $\ba$ by a constant or multiply all of the elements by a fixed nonzero complex number, so without loss of generality we may assume that the elements of $\ba'$ are in fact all real; likewise we may assume that the elements of $\bb'$ are all real.  Now consider the set $\mathfrak{S}'_n$ of permutations of $[n]$ that permute the elements in $[n/C]$ and fix all of the elements $n/C+1, \ldots, n$.  Applying the real case of \cref{thm:main} (already proven in \cref{sec:reals}), we see that the permutations $\pi \in \mathfrak{S}'_n$ already witness $\Omega(n^3)$ distinct values of $S(\ba,\bb; \pi)$.  


\medskip 
\noindent {\bf Case 2.} Suppose that there is a set $\mathcal{L}$ of $n^2/C$ lines each containing at least two points of $\ba$ (the remaining case where there are $n^2/C$ lines each containing at least two points of $\bb$ is identical).  By the popularity principle, there are $n/2C$ points each contained in at least $n/2C$ lines of $\mathcal{L}$; without loss of generality, we may assume that these points are $a_1, \ldots, a_{n/2C}$.  We claim that it suffices to find disjoint pairs of indices $(j_1,k_1), \ldots, (j_{n/6C}, k_{n/6C})$ such that none of the $n/6C$ complex numbers
\begin{equation}\label{eq:ratios}
(a_{k_1}-a_{j_1})(b_2-b_1), \ (a_{k_2}-a_{j_2})(b_4-b_3), \ \ldots, \ (a_{k_{n/6C}}-a_{j_{n/6C}})(b_{n/3C}-b_{n/3C-1})
\end{equation}
is a real multiple of another.  Indeed, the argument from the beginning of \Cref{sec:reals} shows that $S(\ba,\bb)$ contains a translate of the set of subset sums of the numbers in \eqref{eq:ratios}, and \Cref{thm:gamma} guarantees that these numbers have at least $\Omega(n^3)$ subset sums. 

We obtain the pairs $(j_i,k_i)$ iteratively.  To start, set $(j_1,k_1):=(1,2)$.  Suppose that we have already obtained the disjoint pairs $(j_1, k_1), \ldots, (j_s,k_s)$ for some $s<N/6C$.  Let $j_{s+1}$ be the smallest element of $[n/2C]$ that has not yet appeared in any of the pairs.  Among the $N/2C$ lines of $\mathcal{L}$ containing $a_{j_{s+1}}$, at most $s$ point in directions parallel to the quantities $$\frac{(a_{k_i}-a_{j_i})(b_{2i}-b_{2i-1})}{b_{2s+2}-b_{2s+1}}$$
for $1 \leq i \leq s$, and at most $2s$ contain points that have already appeared in the earlier pairs $(a_{j_i}, a_{k_i})$ for $1 \leq i \leq s$; these two possibilities together account for at most $3s \leq N/2C-3$ lines.  Choose any remaining line of $\mathcal{L}$ containing $a_{j_{s+1}}$, and take $k_{s+1}$ so that $a_{k_{s+1}}$ is any other point on this line.  This choice guarantees that $(a_{k_{s+1}}-a_{j_{s+1}})(b_{2s+2}-b_{2s+1})$ is not a real multiple of $(a_{k_i}-a_{j_i})(b_{2i}-b_{2i-1})$ for any $1 \leq i \leq s$.  The procedure produces $N/6C$ pairs, as desired.  This completes the proof.

\section{Fields of characteristic $0$} 
We have shown that there is a constant $c>0$ such that $|\mathcal S(\ba,\bb)|\geq cn^3$ for all $\ba,\bb\in\mathbb C^n$ with distinct entries. We claim that the full version of \cref{thm:main} holds with the same value of $c$.   

Let $\mathbb K$ be a field of characteristic $0$, and let $\ba=(a_1,\ldots,a_n)$ and $\bb=(b_1,\ldots,b_n)$ be tuples in $\mathbb K^n$ each with distinct entries. Let $\mathbb K'$ be the field extension of $\mathbb Q$ generated by $a_1,\ldots,a_n,b_1,\ldots,b_n$. It is well known that every finitely generated field extension of $\mathbb Q$ embeds into $\mathbb C$. Thus, there exists a field embedding $\iota\colon\mathbb K'\to\mathbb C$. Consider the tuples $\iota(\ba):=(\iota(a_1),\ldots,\iota(a_n))$ and $\iota(\bb):=(\iota(b_1),\ldots,\iota(b_n))$ in $\mathbb{C}^n$. Then $|\mathcal S(\ba,\bb)|=|\mathcal S(\iota(\ba),\iota(\bb))|\geq cn^3$ by the complex case of \Cref{thm:main}. 

\section*{Acknowledgments}
Colin Defant was supported by a Benjamin Peirce Fellowship at Harvard University.  Noah Kravitz was supported in part by a NSF Mathematical Sciences Postdoctoral Research Fellowship under grant DMS-2501336.  We thank Roger Lid\'on Ardanuy for posing this question a few years ago, and Daniel Zhu for helpful discussions.

\end{document}